\definecolor{webgreen}{rgb}{0,.5,0}
\definecolor{webbrown}{rgb}{.6,0,0}
\theoremstyle{plain}
\newtheorem{theorem}{Theorem}
\theoremstyle{definition}
\theoremstyle{remark}
\newtheorem*{remark}{Remark}
\newcommand{\lcm}{\mathop{\mathrm{lcm}}}
\newcommand{\Disc}{\mathop{\mathrm{Disc}}}
\newcommand{\Res}{\mathop{\mathrm{Res}}}
\newcommand{\beq}{\begin{equation}}
\newcommand{\eeq}{\end{equation}}
\newcommand{\seqnum}[1]{\href{http://oeis.org/#1}{\underline{#1}}}
\begin{document}

\begin{center}

\vskip 1cm{\LARGE\bf 
On integral points on biquadratic curves and 
\vskip .1in
near-multiples of squares in Lucas sequences}

\vskip 1cm

\large

Max A. Alekseyev\\
Department of Mathematics \\
The George Washington University \\
2115 G St. NW, Washington, DC 20052 \\
United States \\
\href{mailto:maxal@gwu.edu}{\tt maxal@gwu.edu} \\
\ \\
Szabolcs Tengely \\
Mathematical Institute \\ 
University of Debrecen \\
4010 Debrecen, PO Box 12 \\
Hungary \\
\href{mailto:tengely@science.unideb.hu}{\tt tengely@science.unideb.hu} \\
\end{center}

\vskip .2 in

\begin{abstract}
We describe an algorithmic reduction of the search for integral points on a curve $y^2 = ax^4 + bx^2 + c$ 
with $ac(b^2-4ac)\ne 0$
to solving a finite number of Thue equations.
While existence of such reduction is anticipated 
from arguments of algebraic number theory, 
our algorithm is elementary and to best of our knowledge 
is the first published algorithm of this kind.
In combination with other methods and powered by existing software Thue equations solvers,
it allows one to efficiently compute integral points on biquadratic curves.

We illustrate this approach with a particular application of finding near-multiples of squares in Lucas sequences.
As an example, we establish that among Fibonacci numbers only $2$ and $34$ 
are of the form $2m^2+2$; only $1$, $13$, and $1597$ are of the form $m^2-3$; and so on.

As an auxiliary result, we also give an algorithm for solving a Diophantine equation $k^2 = \tfrac{f(m,n)}{g(m,n)}$ in integers $m, n, k$, 
where $f$ and $g$ are homogeneous quadratic polynomials.
\end{abstract}

\section{Introduction}

Siegel~\cite{Siegel1929} proved that any equation $y^2=f(x)$ with irreducible polynomial $f\in\mathbb{Z}[x]$ 
of degree at least 3 has finitely many integral points. With the method of Baker~\cite{Baker1966,Baker1967a,Baker1967b}, it became possible to 
bound the solutions and perform an exhaustive search. For third-degree curves, Baker's method was a subject to many practical improvements,
and now there exists a number of software implementations for finding integral points on elliptic curves~\cite{MAGMA,eclib,SAGE}.
These procedures are based on a method developed by Stroeker and Tzanakis \cite{StTz} and independently by Gebel, Peth\H{o} and Zimmer \cite{GPZell}.

Thue equations of the form $g(x,y)=d$, where $g\in\mathbb{Z}[x,y]$ is a homogeneous irreducible polynomial of degree at least 3 and $d\in\mathbb{Z}$,
were first studied by Thue~\cite{Thue1909}, who proved that they have only a finite number of integer solutions.
In computer era, Thue equations became a subject to developments of computational methods,
resulting in at least two implementations: in computer algebra systems MAGMA~\cite{MAGMA} and PARI/GP~\cite{PARI}.
For our practical computations, we chose SAGE~\cite{SAGE}, which adopts PARI/GP Thue equations solver based on Bilu and Hanrot's improvement~\cite{Bilu1996}
of Tzanakis and de Weger's method~\cite{Tzanakis1989}.

In the current work, we show how to reduce a search for integral points on a biquadratic curve 
$$y^2 = ax^4 + bx^2 + c$$ 
with integer (or, more generally, rational) coefficients $a$, $b$, $c$ with $ac(b^2-4ac)\ne 0$ firstly to a Diophantine equation
$k^2 = \tfrac{f(m,n)}{g(m,n)}$ 
in coprime integers $m$, $n$ with homogeneous quadratic polynomials $f$ and $g$ (Theorem~\ref{TBiqCur}),
and then to a finite number of quartic Thue equations (Theorem~\ref{TFracSq}).\footnote{In the course of this reduction, 
we may also encounter Thue-like equations $g(x,y)=d$ with $g$ being a \emph{reducible} homogeneous polynomial of degree 4.
Such equations however are easily solvable~\cite[Theorem~3]{Alekseyev2011}.}
While possibility of reduction to Thue equations was described by Mordell~\cite{Mordell1969} based on arguments from algebraic number theory,
to the best of our knowledge, there is no published algorithm applicable for the general case.
Furthermore, in contrast to traditional treatment of this kind of problems with algebraic number theory~\cite{Mordell1969,Steiner1991,Weger1995,Petho1998},
our reduction method is elementary.
It may be viewed as a generalization of the method of Steiner and Tzanakis~\cite{Steiner1991}
who reduced Ljunggren equation $y^2 = 2x^4 - 1$ to two Thue equations (in particular, we obtain the same Thue equations).

There are other methods which can be used in certain cases to determine all integral solutions of the equation $y^2 = ax^4 + bx^2 + c$.
Poulakis~\cite{RungeP} provided an elementary algorithm to solve Diophantine equations of the form $y^2=f(x)$,
where $f(x)$ is quartic monic polynomial with integer coefficients,
based on Runge's method~\cite{RungeR, RungeS, RungeW}. Here it is crucial
that the leading coefficient of $f(x)$ is 1 (the idea also works if the leading coefficient is a perfect square). 
Using the theory of Pell equations, Kedlaya~\cite{Kedlaya-Pell} described a method to solve the system of equations
$$
\begin{cases}
x^2-a_1y^2 = b_1,\\
P(x,y) = z^2,
\end{cases}
$$
where $P$ is a given integer polynomial, and implemented his algorithm in Mathematica.\footnote{Kedlaya's implementation is available from \url{http://math.ucsd.edu/~kedlaya/papers/pell.tar}.} 
If we set $P(x,y)=c_1x+d_1,$ then we obtain a quartic equation of the form
$(a_1c_1y)^2=a_1z^4-2a_1d_1z^2-a_1b_1c_1^2.$
There is also a simple reduction of the equation $y^2 = ax^4 + bx^2 + c$ to an elliptic equation: after multiplying the equation by $a^2x^2$, one 
obtains
$$
(axy)^2=(ax^2)^3+b(ax^2)^2+ac(ax^2),
$$
which can be further written as
$$
Y^2=X^3+bX^2+acX.
$$
As we noted earlier to determine all integral points on a given elliptic curve one can follow a method developed
by Stroeker and Tzanakis \cite{StTz} and independently by Gebel, Peth\H{o} and Zimmer \cite{GPZell}.
The disadvantage of this approach is that there is no known algorithm to determine the rank of the so-called Mordell-Weil group
of an elliptic curve, which is necessary to determine all integral points on the curve. 

For efficient computation of integral points on biquadratic curves, 
we implemented in SAGE a combination of the elliptic curve and reduction to Thue equations methods.\footnote{Our implementation 
is available from \url{http://www.math.unideb.hu/~tengely/biquadratic.sage}.}
By default we employ the elliptic curve method and if it fails, we fall back to our reduction to Thue equations.

Our approach also allows one to efficiently compute solutions to a system of Diophantine equations (Theorem~\ref{Tzz2}):
$$
\begin{cases} 
a_1 x^2 + c_1 z = d_1,\\
b_2 y^2 + c_2 z^2 = d_2.
\end{cases}
$$
From this perspective, it continues earlier work~\cite{Alekseyev2011}, where the first author described an algorithm for computing solutions to a system of Diophantine equations:
$$
\begin{cases} 
a_1 x^2 + b_1 y^2 + c_1 z^2 = d_1,\\
a_2 x^2 + b_2 y^2 + c_2 z^2 = d_2,
\end{cases}
$$
and demonstrated applications for finding common terms of distinct Lucas sequences of the form $U(P,\pm 1)$ or $V(P,\pm 1)$,
which include Fibonacci, Lucas, Pell, and Lucas-Pell numbers.
The current method also has applications for such Lucas sequences, allowing one to find all terms 
of the form $a\cdot m^2 + b$ for any fixed integers $a$, $b$.
While the question of finding multiples of squares (i.e., with $b=0$) in Lucas sequences has been widely studied, starting with the works of 
Cohn~\cite{Cohn1964} and Wyler~\cite{Wyler1964}
(we refer to Bremner and Tzanakis~\cite{Bremner2007a} for an extensive review of the literature),
finding \emph{near}-multiples of squares (i.e., with $b\ne 0$) has got so far only a limited attention~\cite{Finkelstein1973,Finkelstein1975,Robbins1981,WalshNear}.
In the current work, we present an unified computational approach for solving this problem.
As an example, we establish 
that among Fibonacci numbers 
only $2$ and $34$ are of the form $2m^2+2$; only $1$, $13$, and $1597$ are of the form $m^2-3$; and so on.

The paper is organized as follows. In Section~\ref{SecHomPol}, 
we develop our machinery for homogeneous quadratic polynomials with integer coefficients.
In Section~\ref{SecBiQ}, we prove our method for finding integral points on biquadratic curves
and illustrate its workflow on Ljunggren equation.
In Section~\ref{SecInLucas}, we further demonstrate how our method can be used for finding near-multiples of squares in Lucas sequences and 
list some results of this kind.

\section{Homogeneous quadratic polynomials}\label{SecHomPol}

We start with studying properties of quadratic homogeneous polynomials with integer coefficients in two and three variables.
We do not distinguish between homogeneous polynomials in two variables from their univariate counterparts 
(i.e., $f(x,y)=ax^2 + bxy + cy^2$ and $\tilde{f}(z)=az^2 + bz + c$) that allows us to define resultant ($\Res$) and discriminant ($\Disc$) on them.

\begin{theorem}[Theorem~5 in \cite{Alekseyev2011}\footnote{This theorem corrects an error in \cite[Corollary~6.3.8]{Cohen07}.}]
\label{ThABC}
Let $A, B, C$ be non-zero integers and let $(x_0, y_0, z_0)$ with $z_0 \ne 0$ be a particular non-trivial integer solution
to the Diophantine equation $Ax^2+By^2+Cz^2=0$. Then its general integer solution is given by
\begin{equation}\label{SolSys}
(x, y, z) = \frac{p}{q}\; ( P_x(m, n),\; P_y(m, n),\; P_z(m, n))
\end{equation}
where $m, n$ as well as $p, q$ are coprime integers with $q>0$ dividing $2 \lcm(A,B) Cz_0^2$, and
\begin{equation}\label{Pxyz}
\begin{array}{lcl}
P_x(m,n) & = & x_0 A m^2 + 2y_0 Bmn - x_0 B n^2,\\
P_y(m,n) & = & -y_0 A m^2 + 2x_0 A mn + y_0 B n^2,\\
P_z(m,n) & = & z_0 A m^2 + z_0 B n^2.
\end{array}
\end{equation}
\end{theorem}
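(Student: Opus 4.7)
The plan is to combine the classical parametrization of a rational conic from a known rational point with a careful prime-by-prime bound on the denominator. First I would check directly that \eqref{Pxyz} yields a solution for every $(m,n)$: expanding $A P_x^2 + B P_y^2$, the cross terms in $mn$ cancel and the remainder collapses into $(A x_0^2 + B y_0^2)(A m^2 + B n^2)^2$, which equals $-C z_0^2 (A m^2 + B n^2)^2 = -C P_z^2$ by the hypothesis $A x_0^2 + B y_0^2 + C z_0^2 = 0$. So the triple \eqref{Pxyz} always lies on the conic.

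For surjectivity over $\mathbb{Q}$, I would use the classical pencil of lines through $(x_0, y_0, z_0)$. Given any other rational point $(x, y, z)$ on the conic, the joining line is rational; conversely, parametrizing this pencil by $(m:n) \in \mathbb{P}^1(\mathbb{Q})$ and substituting a parametric line into $A X^2 + B Y^2 + C Z^2 = 0$, the resulting quadratic in the line parameter has $t=0$ recovering the base point and a rational second root giving a new point on the conic. A suitable choice of how $(m,n)$ labels the pencil (chosen so that the line has integral direction data) makes the second intersection coincide up to a scalar with the polynomial triple \eqref{Pxyz}, so every rational solution has the form $\tfrac{p}{q}(P_x(m,n), P_y(m,n), P_z(m,n))$ for coprime integers $(m,n)$ and $(p,q)$.

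The main obstacle is Step~3: showing that the denominator $q$ divides $2\lcm(A,B) C z_0^2$. Any prime $\ell \mid q$ must divide $\gcd(P_x, P_y, P_z)$, and I would control this gcd using the two identities
\[
P_z = z_0 (A m^2 + B n^2), \qquad x_0 P_y + y_0 P_x = -2 C m n z_0^2,
\]
the second of which follows from a one-line expansion using $A x_0^2 + B y_0^2 = -C z_0^2$. For any odd prime $\ell$ with $\ell \nmid z_0$, these force $\ell \mid A m^2 + B n^2$ and $\ell \mid C m n$; splitting into the cases $\ell \mid C$, $\ell \mid m$, or $\ell \mid n$ and using $\gcd(m,n)=1$ to push through the first congruence, one finds $\ell \mid \lcm(A,B)\, C\, z_0^2$ in every case. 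Sharpening this to the correct prime powers, and in particular dealing with the prime $\ell=2$ (which is the source of the extra factor of $2$ in the stated bound), requires a local $2$-adic refinement of the same case analysis; this is where the bulk of the bookkeeping sits.
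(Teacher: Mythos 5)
A preliminary remark: the paper does not prove this statement at all --- it is imported verbatim as Theorem~5 of \cite{Alekseyev2011} --- so there is no in-paper proof to compare yours against, and your proposal must stand on its own. The easy parts of it do: the verification that \eqref{Pxyz} lies on the conic is correct (one gets $AP_x^2+BP_y^2=(Ax_0^2+By_0^2)(Am^2+Bn^2)^2=-CP_z^2$), and the chord-through-$(x_0,y_0,z_0)$ argument for surjectivity is the standard one and works, modulo minor care with tangent lines, with directions satisfying $Am^2+Bn^2=0$ (these produce the solutions with $z=0$), and with an overall sign (the second intersection is proportional to $(-P_x,-P_y,P_z)$, which is absorbed into the sign of $p$).

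The genuine gap sits exactly where you park ``the bulk of the bookkeeping'': the divisibility $q\mid 2\lcm(A,B)Cz_0^2$ is the entire nontrivial content of the theorem beyond the classical rational parametrization of a conic, and your argument does not establish it. Your two identities $P_z=z_0(Am^2+Bn^2)$ and $x_0P_y+y_0P_x=-2Cmnz_0^2$ are correct and are the right raw material, but the case analysis you run with them only shows that an odd prime $\ell\nmid z_0$ dividing $\gcd(P_x,P_y,P_z)$ must divide $\lcm(A,B)C$ --- a first-power statement about a restricted set of primes. Missing are: (i) the primes dividing $z_0$, which genuinely contribute --- for $A=B=1$, $C=-1$, $(x_0,y_0,z_0)=(3,4,5)$, $(m,n)=(1,3)$ one gets $(P_x,P_y,P_z)=(0,50,50)$ with $\gcd$ equal to $50=2\lcm(A,B)\,|C|\,z_0^2$, so the $z_0^2$ factor is sharp and must be proved, not excluded from the analysis; (ii) the correct prime-power exponents, which do \emph{not} follow from your two identities alone, since for a prime dividing $z_0$ both right-hand sides are divisible by high powers of that prime and a gcd-of-two-multiples argument cannot separate exponents; and (iii) the prime $2$. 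Closing this requires either further integral combinations of $mP_x,\,nP_x,\,mP_y,\dots$ that eliminate $z_0$ (in the spirit of Theorem~\ref{TPolGCD}), or, more directly, inverting the chord map --- writing $(m,n)$ explicitly in terms of $(x,y,z)$ and $(x_0,y_0,z_0)$ and computing the proportionality factor, which is how one actually reads off the stated modulus. As written, the proposal proves a qualitative rational parametrization but not the quantitative statement of the theorem.
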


We refer to \cite{Cohen07,Cremona03} for general methods of finding a particular solution to a quadratic homogeneous equation 
in three variables.\footnote{In PARI/GP, a particular solution can be computed with the function \emph{bnfisnorm}.}

\begin{theorem}\label{TPolGCD} Let $P_1(x,y)$ and $P_2(x,y)$ be homogeneous quadratic polynomials with integer coefficients
and $R=\Res(P_1,P_2)\ne 0$. 
Let $G$ be the largest element in the Smith normal form of the resultant matrix of $P_1$ and $P_2$.\footnote{While $G=R$ would also satisfy the theorem statement,
we want $G$ as small as possible. The resultant $R$ is often much larger than $G$ defined in the theorem.}
Then for any coprime integers $m,n$, $\gcd(P_1(m,n),P_2(m,n))$ divides $G$.
\end{theorem}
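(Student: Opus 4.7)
The plan is to exploit the fundamental identity relating the Sylvester matrix of $P_1, P_2$ to the vector $(m^3, m^2n, mn^2, n^3)^T$. Writing $P_i(x,y) = a_i x^2 + b_i xy + c_i y^2$, the $4 \times 4$ resultant (Sylvester) matrix $S$ satisfies
$$S \cdot (m^3,\; m^2n,\; mn^2,\; n^3)^T = (m P_1(m,n),\; n P_1(m,n),\; m P_2(m,n),\; n P_2(m,n))^T,$$
which one verifies by direct computation from the rows of $S$. Hence every entry of the right-hand side is divisible by $g := \gcd(P_1(m,n), P_2(m,n))$.

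Next I would bring in the Smith normal form: write $S = U D V$ with $U, V \in \mathrm{GL}_4(\mathbb{Z})$ and $D = \mathrm{diag}(d_1, d_2, d_3, d_4)$ where $d_1 \mid d_2 \mid d_3 \mid d_4 = G$. Multiplying the identity above on the left by the integer matrix $U^{-1}$ preserves divisibility by $g$, so $g$ divides every entry of $D V v$, where $v = (m^3, m^2 n, m n^2, n^3)^T$. The $i$-th entry of $D V v$ equals $d_i (Vv)_i$, and since $d_i \mid G$, we get $g \mid G \cdot (Vv)_i$ for every $i$.

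To conclude, I would use the coprimality hypothesis: any prime dividing all of $m^3, m^2n, mn^2, n^3$ divides both $m$ and $n$, so $\gcd$ of the entries of $v$ is $1$. Because $V$ is unimodular, the entries of $Vv$ share the same $\gcd$, namely $1$, so one can find integers $\alpha_1, \ldots, \alpha_4$ with $\sum \alpha_i (Vv)_i = 1$. Multiplying by $G$ and applying $g \mid G (Vv)_i$ entry-wise yields $g \mid G$, as desired.

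The main subtlety is the divisibility bookkeeping in the last two steps: we need to pass from the individual relations $g \mid d_i (Vv)_i$ to the single relation $g \mid G$. The key observation that makes this work is that all the invariant factors $d_i$ divide $G = d_4$, allowing us to uniformly weaken to $g \mid G(Vv)_i$ and then use unimodularity of $V$ together with coprimality of $m, n$ to extract the $G$. Everything else is routine linear algebra over $\mathbb{Z}$; the only ingredient beyond Smith normal form is the basic fact that $\gcd(m^3, m^2 n, m n^2, n^3) = 1$ when $\gcd(m,n) = 1$.
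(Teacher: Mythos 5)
Your proof is correct. The matrix identity $S\,(m^3,m^2n,mn^2,n^3)^T=(mP_1(m,n),\,nP_1(m,n),\,mP_2(m,n),\,nP_2(m,n))^T$ is exactly right (note the paper's displayed resultant matrix is the transpose of your $S$, which is harmless since the Smith normal form, and hence $G$, is unchanged under transposition), and each subsequent step --- propagating divisibility by $g$ through the integer matrix $U^{-1}$, weakening $g\mid d_i(Vv)_i$ to $g\mid G(Vv)_i$ using $d_i\mid d_4=G$, and extracting $g\mid G$ from a B\'ezout relation $\sum\alpha_i(Vv)_i=1$ --- checks out. The route differs from the paper's in its bookkeeping: the paper does not use the full factorization $S=UDV$, but instead solves the two linear systems that express $G x^3$ and $G y^3$ as integer combinations of $xP_1,yP_1,xP_2,yP_2$ (integrality of the coefficients being justified by the third determinant divisor, since $G=R/d_3$ is the last invariant factor), and then concludes from $g\mid Gm^3$ and $g\mid Gn^3$ that $g\mid G\gcd(m^3,n^3)=G$. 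Your version is arguably cleaner and more symmetric, since it never needs to identify which specific target vectors lie in the image of $S$ over $\mathbb{Q}$ or to verify integrality of particular Cramer-rule solutions; the paper's version has the practical advantage that the two explicit linear systems it solves directly yield the algorithm for computing $G$ described in the remark following the theorem. Both proofs ultimately rest on the same two ingredients: the Smith normal form of the Sylvester matrix and the fact that $\gcd(m,n)=1$ forces the cubic monomials in $m,n$ to be coprime.
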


\begin{proof} 
Let $P_1(x,y)=a_1 x^2+b_1 x y + c_1 y^2$ and $P_2(x,y) = a_2 x^2+b_2 x y + c_2 y^2$ where $a_1,b_1,c_1,a_2,b_2,c_2$ are integer.
Consider polynomials $x\cdot P_1(x,y)$, $y\cdot P_1(x,y)$, $x\cdot P_2(x,y)$, $y\cdot P_2(x,y)$ as linear combinations with integer coefficients of
the basis terms $x^3$, $x^2y$, $xy^2$, $y^3$.
Our goal is to find two linear combinations of these polynomials: 
one equal an integer multiple of $x^3$ and the other equal an integer multiple of $y^3$. 
This corresponds to the following two systems of linear equations with the same resultant matrix:
\beq\label{EqTwoSys}
\begin{matrix}
x^3:\\
x^2y:\\
xy^2:\\
y^3:
\end{matrix}
\qquad
\begin{bmatrix}
a_1 & 0 & a_2 & 0 \\
b_1 & a_1 & b_2 & a_2 \\
c_1 & b_1 & c_2 & b_2 \\
 0  & c_1 & 0   & c_2
\end{bmatrix}
\cdot
\begin{bmatrix}
t_1\\
t_2\\
t_3\\
t_4
\end{bmatrix}
\quad 
=
\quad
\begin{bmatrix}
1\\
0\\
0\\
0
\end{bmatrix}
\quad
\text{or}
\quad
\begin{bmatrix}
0\\
0\\
0\\
1
\end{bmatrix}
\eeq
with respect to rational numbers $t_1,t_2,t_3,t_4$. 
Since the matrix determinant equals the resultant $R\ne 0$,
the systems have unique solutions of the form:
$$(t_1,t_2,t_3,t_4) = \frac{1}{R} \left( \Delta_1, \Delta_2, \Delta_3, \Delta_4 \right)
= \frac{1}{G} \left( \frac{\Delta_1}{d_3}, \frac{\Delta_2}{d_3}, \frac{\Delta_3}{d_3}, \frac{\Delta_4}{d_3} \right)$$
where each $\Delta_i$ is the determinant of a certain $3\times 3$ minor of the resultant matrix 
and $d_3$ is its third determinant divisor. Here we used the fact that $\tfrac{R}{d_3}=G$ is the fourth elementary divisor of the resultant matrix 
(and the largest element of its Smith normal form).
It is important to notice that all vector components $\tfrac{\Delta_i}{d_3}$ are integer.

So we have two linear combinations of $x\cdot P_1(x,y)$, $y\cdot P_1(x,y)$, $x\cdot P_2(x,y)$, $y\cdot P_2(x,y)$ with integer coefficients 
$\tfrac{\Delta_1}{d_3}$, $\tfrac{\Delta_2}{d_3}$, $\tfrac{\Delta_3}{d_3}$, $\tfrac{\Delta_4}{d_3}$ equal $G\cdot x^3$ and $G\cdot y^3$, respectively.
For $(x,y)=(m,n)$ where $m,n$ are coprime integers, these linear combinations imply that $\gcd(P_1(m,n),P_2(m,n))$ 
divides both $G\cdot m^3$ and $G\cdot n^3$. Therefore, $\gcd(P_1(m,n),P_2(m,n))$ divides $\gcd(G\cdot m^3,G\cdot n^3) = G\cdot \gcd(m^3,n^3) = G$.
\end{proof}

\begin{remark} To compute $G$ in practice, we do not need to compute Smith normal form of the resultant matrix. 
Instead, we simply solve the two linear systems \eqref{EqTwoSys} and define $G$ as the least common multiple of all denominators in both solutions $(t_1,t_2,t_3,t_4)$.
\end{remark}

\begin{theorem}\label{TPol2Sq} Any homogeneous quadratic polynomial with integer coefficients and non-zero discriminant can be represented 
as a linear combination with non-zero rational coefficients of squares of two homogeneous linear polynomials.
Moreover, these polynomials are linearly independent.
\end{theorem}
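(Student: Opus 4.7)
The plan is to argue by completion of the square, with a separate small case for the degenerate situation where both pure-square coefficients vanish. Write $P(x,y) = ax^2 + bxy + cy^2$ with $\Disc(P) = b^2 - 4ac \ne 0$. I will split into cases on whether $a$ or $c$ is nonzero.

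In the principal case $a \ne 0$, I would simply record the completion-of-the-square identity
\[
P(x,y) \;=\; a\left(x + \tfrac{b}{2a}\,y\right)^2 \;-\; \tfrac{\Disc(P)}{4a}\, y^2,
\]
which exhibits $P$ as a rational linear combination of the squares of the two linear forms $L_1 = x + \tfrac{b}{2a}\,y$ and $L_2 = y$. The two scalar coefficients $a$ and $-\Disc(P)/(4a)$ are both nonzero by the hypotheses on $a$ and on $\Disc(P)$. Linear independence of $L_1$ and $L_2$ is immediate, since $L_1$ has coefficient $1$ at $x$ whereas $L_2$ has coefficient $0$ at $x$, so their $2\times 2$ coefficient matrix has determinant $1 \ne 0$. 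The case $c \ne 0$ is handled by the symmetric identity obtained from completing the square in $y$ first.

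The only remaining case is $a = c = 0$. Here the hypothesis $\Disc(P) = b^2 \ne 0$ forces $b \ne 0$, and the elementary identity
\[
b\,xy \;=\; \tfrac{b}{4}\,(x+y)^2 \;-\; \tfrac{b}{4}\,(x-y)^2
\]
does the job, with nonzero coefficients $\pm b/4$ and with linearly independent forms $x+y$ and $x-y$. Combining the three cases proves the statement.

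There is no real obstacle in this argument; the only point that requires a touch of care is ensuring that \emph{both} scalar coefficients in the decomposition are nonzero (not just one), which is precisely where the nonvanishing of $\Disc(P)$ is used, and ensuring the two linear forms produced are linearly independent, which in each case is transparent from the explicit construction.
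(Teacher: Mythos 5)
Your proof is correct and follows essentially the same route as the paper's: completion of the square in the non-degenerate cases (your identity $a\bigl(x+\tfrac{b}{2a}y\bigr)^2 - \tfrac{\Disc(P)}{4a}y^2$ is just the paper's $\tfrac{1}{4a}(2ax+by)^2 + \tfrac{4ac-b^2}{4a}y^2$ rewritten), plus the identical $bxy = \tfrac{b}{4}(x+y)^2 - \tfrac{b}{4}(x-y)^2$ identity when $a=c=0$. The only cosmetic difference is that the paper first splits off the trivial case $b=0$, which your formulation absorbs into the main case.
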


\begin{proof} Let $a x^2+b x y + c y^2$ be a homogeneous quadratic polynomial with integer coefficients and non-zero discriminant, i.e., $b^2-4ac\ne 0$.
If $b=0$ then $ac\ne 0$ and the statement is trivial.

Suppose that $b\ne 0$. If $a\ne 0$, then we have
$$a x^2+b x y + c y^2 = \frac{1}{4a}\cdot (2a x + by)^2 + \frac{4ac - b^2}{4a}\cdot y^2.$$
Similarly, if $c\ne 0$, then we have
$$a x^2+b x y + c y^2 = \frac{4ac - b^2}{4c}\cdot x^2 + \frac{1}{4c}\cdot (bx + 2cy)^2.$$
Finally, if $a=c=0$, then
$$bxy = \frac{b}{4}\cdot (x+y)^2 - \frac{b}{4}\cdot (x-y)^2.$$
It is easy to see that in all cases, the linear polynomials are linearly independent.
\end{proof}

\begin{theorem}\label{TFracSq} Let $P_1(x,y)$ and $P_2(x,y)$ be homogeneous quadratic polynomials
with integer coefficients such that $\Disc(P_1)\ne 0$, $\Disc(P_2)\ne 0$, and $\Res(P_1,P_2)\ne 0$.
Then the equation
\beq
\label{EqSqFrac}
z^2 = \frac{P_1(x,y)}{P_2(x,y)}
\eeq
has a finite number of integer solutions $(x,y,z)=(m,n,k)$ with $\gcd(m,n)=1$.
\end{theorem}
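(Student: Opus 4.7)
The plan is to first use Theorem~\ref{TPolGCD} to restrict $P_2(m,n)$ to finitely many values, and then handle each subproblem via a separable quartic superelliptic equation whose coprime integer solutions are classically finite.

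For any coprime pair $(m, n) \neq (0, 0)$, Theorem~\ref{TPolGCD} implies that $d := \gcd(P_1(m, n), P_2(m, n))$ divides the fixed constant $G$. Writing $P_i(m, n) = d u_i$ with $\gcd(u_1, u_2) = 1$, the equation $k^2 u_2 = u_1$ forces $u_2 \mid 1$, hence $u_2 = \pm 1$, and so $P_2(m, n)$ lies in the finite set $\{\pm d : d \mid G\}$. (Here $d \neq 0$: the case $P_2(m, n) = 0$ would force $P_1(m, n) = 0$ as well, giving $P_1$ and $P_2$ a common nontrivial projective zero, contradicting $\Res(P_1, P_2) \neq 0$.) Fix any such value $c = P_2(m, n)$; the problem reduces to the system $P_1(m, n) = c k^2$ and $P_2(m, n) = c$. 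Multiplying these yields
$$F(m, n) := P_1(m, n) \cdot P_2(m, n) = (c k)^2,$$
where $F$ is homogeneous of degree~$4$ with integer coefficients. Since $\Disc(P_i) \neq 0$, each $P_i$ has two distinct roots as a binary form, and $\Res(P_1, P_2) \neq 0$ guarantees the two root sets are disjoint; hence $F$ is a separable quartic form.

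Finiteness of the coprime integer solutions to $F(m, n) = L^2$ (with $L = ck$) is classical: factoring $F$ over its splitting field, the requirement that the product of the four Galois-conjugate linear factors be a perfect square restricts each factor $m - \theta_i n$, modulo squares of units and class-group representatives, to one of finitely many possibilities; eliminating auxiliary variables between pairs of such relations then produces a finite collection of quartic Thue equations, each with finitely many solutions by Thue's theorem. Combining over the finitely many choices of $c$ gives the theorem. The main obstacle is making this final reduction both explicit and elementary; I expect the paper to accomplish this by using Theorems~\ref{TPol2Sq} and~\ref{ThABC} to parametrize the conic $P_2(m, n) = c$ by homogeneous quadratic functions of a new pair of coprime integer parameters, and then substituting into $P_1(m, n) = c k^2$ to directly obtain the quartic Thue equations promised in the introduction.
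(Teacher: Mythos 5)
Your opening step---using Theorem~\ref{TPolGCD} to conclude that $P_2(m,n)$ must be a (positive or negative) divisor of $G$, so that the problem splits into finitely many systems $P_1(m,n)=ck^2$, $P_2(m,n)=c$---is exactly the paper's first move. The fatal problem comes next. By multiplying the two equations into $F(m,n)=P_1(m,n)P_2(m,n)=(ck)^2$ you discard the information that $P_2(m,n)$ equals the \emph{constant} $c$, keeping only that it is $c$ times a square; and your key claim, that $F(m,n)=L^2$ with $F$ a separable binary quartic has finitely many coprime integer solutions, is not classical---it is false in general. Dehomogenizing with $t=m/n$, coprime solutions correspond (with no bound on the denominator of $t$) to rational points on the genus-one curve $s^2=F(t,1)$, which may be infinite in number. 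Concretely, take $P_1=x^2+y^2$ and $P_2=x^2+10y^2$ (both discriminants and the resultant are nonzero). Then $(m,n)=(3,4)$ gives $F(3,4)=25\cdot 169=65^2$; the corresponding point $(t^2,ts)=(9/16,\,195/64)$ on the Jacobian $Y^2=X(X+1)(X+10)$ is non-integral, hence of infinite order by Lutz--Nagell, so the curve $s^2=(t^2+1)(t^2+10)$ has infinitely many rational points and $F(m,n)=L^2$ has infinitely many coprime solutions (this is Euler's concordant-forms problem). None of them solves the original equation, since $25/169$ is not an integer. Your sketched ideal-factorization argument cannot be completed either: writing $m-\theta_i n=\gamma_i\delta_i^2$ and eliminating between pairs does not yield Thue equations (quartic form $=$ constant) but further ``quartic $=$ square'' conditions, i.e., it goes in circles---as it must, since the statement it would prove is false. (A smaller slip: the four roots of $F=P_1P_2$ are not Galois conjugates of one another; generically they form two quadratic orbits.)

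The correct continuation is the one you relegate to a guess in your last sentence, and it is what the paper does: the two equations of the system must be used \emph{asymmetrically}. One writes $P_1=a\,Q_1^2+b\,Q_2^2$ by Theorem~\ref{TPol2Sq}, applies Theorem~\ref{ThABC} to the ternary quadratic $a\,Q_1^2+b\,Q_2^2-gz^2=0$ to parametrize $(Q_1,Q_2)$, hence $(x,y)$, by quadratic forms in new coprime parameters, and only then substitutes into the second equation $P_2(x,y)=g$. Because the right-hand side there is a constant rather than a square times a constant, the substitution produces a quartic form equal to one of finitely many explicit constants---a genuine Thue or Thue-like equation, which has finitely many solutions by \cite[Theorem~3]{Alekseyev2011} once $\Disc(P_2)\ne 0$ rules out the degenerate case where the quartic is proportional to the square of a quadratic. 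This asymmetry is the essential content of the proof, and it is precisely what your multiplication step destroys.
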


\begin{proof} 
Suppose that $(x,y,z)=(m,n,k)$ with $\gcd(m,n)=1$ satisfies the equation \eqref{EqSqFrac}.
Since $P_2(m,n)$ divides $P_1(m,n)$, we have $\gcd(P_1(m,n),P_2(m,n)) = P_2(m,n)$, which by Theorem~\ref{TPolGCD} must divide a certain integer $G$. 
Then for some divisor\footnote{Unless specified otherwise, the divisors of an integer include both positive and negative divisors.} 
$g$ of $G$, we have $P_2(m,n)=g$ and $P_1(m,n) = gk^2$.
So $(x,y,z)=(m,n,k)$ represents a solution to the following system of equations:
\begin{equation}\label{SysSplit}
\begin{cases}
P_1(x,y) = g\cdot z^2, \\
P_2(x,y) = g.
\end{cases}
\end{equation}
Therefore, to find all solutions to \eqref{EqSqFrac}, we need to solve the systems \eqref{SysSplit} for $g$ ranging over the divisors of $G$.
Each such system is solved as follows.

We start with 
using Theorem~\ref{TPol2Sq} to represent $P_1(x,y)$ 
as a linear combination with rational coefficients of squares of two linearly independent polynomials, say, $P_1(x,y) = a\cdot Q_1(x,y)^2 + b\cdot Q_2(x,y)^2$.
Substituting this representation into the first equation of \eqref{SysSplit}, we get the following equation:
\beq\label{EqQ1Q2}
a\cdot Q_1(x,y)^2 + b\cdot Q_2(x,y)^2 - g\cdot z^2 = 0.
\eeq
We solve this equation using Theorem~\ref{ThABC} to obtain $Q_1(x,y) = \tfrac{p}{q}\cdot R_1(m,n)$ 
and
$Q_2(x,y) = \tfrac{p}{q}\cdot R_2(m,n)$, 
where $q$ ranges over the positive divisors of a certain integer and integer $p$ is coprime to $q$.
We solve this system of linear equations with respect to $x,y$ to obtain
$$(x,y) = \frac{p}{q}\cdot \left(S_x(m,n),\; S_y(m,n) \right),$$ 
where $S_x(m,n)$ and $S_y(m,n)$ are linear homogeneous polynomials with rational coefficients (which depend only on $g$ but not $p,q$).
Plugging this into the second equation of \eqref{SysSplit}, we get the following quartic equations with integer coefficients w.r.t. $m,n$:
\beq\label{EP2S}
\ell_g\cdot P_2(S_x(m,n),S_y(m,n)) = q^2\cdot \frac{g\cdot \ell_g}{p^2}.
\eeq
where $\ell_g$ is the least common multiple of the coefficients denominators of $P_2(S_x(m,n),S_y(m,n))$ (notice that $\ell_g$ depends only on $g$ but not $p,q$).
Here $p^2$ must divide $g\ell_g$ and thus there is a finite number of such equations. 
By \cite[Theorem~3]{Alekseyev2011}, each such equation has a finite number of solutions,
unless $P_2(S_x(m,n),S_y(m,n)) = c\cdot T(m,n)^2$ for some polynomial $T(x,y)$, which is not the case since $\Disc(P_2)\ne 0$.
\end{proof}

\begin{remark} Different choices of values for $g,p,q$ may result in the same equation \eqref{EP2S}.
In particular, if $g'=g\cdot d^2$ for some integer $d$, then we can represent equation \eqref{EqQ1Q2} for $g'$ 
in the form $a\cdot Q_1(x,y)^2 + b\cdot Q_2(x,y)^2 - g\cdot (d\cdot z)^2 = 0$ so that it
has the same solutions w.r.t. $Q_1(x,y)$ and $Q_2(x,y)$. Then the equation \eqref{EP2S} for $g'$ has the same left hand side as the one for $g$ 
(with $\ell_{g'}=\ell_g$), while the former has an extra factor $d^2$ in the right hand side.
Therefore, to reduce the number of equations in practice, we can restrict $g$ to the square-free divisors of $G$: for each such $g$, we 
compute the left hand side of \eqref{EP2S} and iterate over all \emph{distinct} integer right hand sides of the form 
$q^2\cdot \tfrac{g\cdot \ell_g\cdot d^2}{p^2}$, where $d^2$ divides $\tfrac{G}{g}$.
\end{remark}

\section{Finding integral points on biquadratic curves}\label{SecBiQ}

Now we are ready to prove our main result:

\begin{theorem}\label{TBiqCur}
Finding integral points on a biquadratic curve
\beq\label{EqBiqCur}
y^2 = a\cdot x^4 + b\cdot x^2 + c
\eeq
with integer coefficients $a,b,c$ and $ac(b^2 - 4ac)\ne 0$, reduces to solving a finite number of quartic Thue equations.
\end{theorem}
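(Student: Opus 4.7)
The plan is to reduce the biquadratic equation \eqref{EqBiqCur} to an instance of \eqref{EqSqFrac} and then invoke Theorem~\ref{TFracSq}. Introducing the auxiliary variable $U = x^2$ turns \eqref{EqBiqCur} into the affine conic
\[
\mathcal{C}\colon\; Y^2 = a U^2 + b U + c,
\]
on which every integer solution $(x,y)$ of \eqref{EqBiqCur} yields a rational point $(U,Y) = (x^2, y)$. If $\mathcal{C}$ has no rational point there is nothing to prove; otherwise I fix any rational base point $(u_0,y_0) \in \mathcal{C}(\mathbb{Q})$, which standard conic methods produce algorithmically.

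Next I parametrize the remaining rational points on $\mathcal{C}$ by intersecting with the line $Y - y_0 = t(U - u_0)$. Writing $t = p/q$ with coprime integers $p, q$, the second intersection with $\mathcal{C}$ has
\[
U \;=\; \frac{u_0\, p^2 \;-\; 2 y_0\, pq \;+\; (a u_0 + b)\, q^2}{p^2 - a q^2},
\]
while the vertical line $U = u_0$ (corresponding to $q = 0$) recovers the point $(u_0, -y_0)$. Clearing the common denominator of $u_0, y_0$ yields two homogeneous quadratic polynomials $P_1, P_2 \in \mathbb{Z}[p,q]$ with $U = P_1(p,q)/P_2(p,q)$, so that the requirement $U = x^2$ becomes
\[
x^2 \;=\; \frac{P_1(p,q)}{P_2(p,q)},
\]
which is an instance of \eqref{EqSqFrac}.

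To apply Theorem~\ref{TFracSq}, I verify the non-degeneracy conditions. A direct computation using the identity $y_0^2 = a u_0^2 + b u_0 + c$ gives, for the uncleared forms, $\Disc(P_1) = 4c$, $\Disc(P_2) = 4a$, and $\Res(P_1,P_2) = b^2 - 4ac$; clearing denominators multiplies each by a nonzero square. All three are therefore nonzero, precisely by the hypothesis $a c (b^2 - 4ac) \ne 0$. Theorem~\ref{TFracSq} then reduces the search to a finite family of equations of the form \eqref{EP2S}, which are quartic Thue equations (with any reducible cases handled by \cite[Theorem~3]{Alekseyev2011}).

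The main obstacle is one of bookkeeping: confirming that every integer solution of \eqref{EqBiqCur} is genuinely captured. This is clean because the line-sweep establishes a bijection between $\mathcal{C}(\mathbb{Q}) \setminus \{(u_0,y_0)\}$ and $\mathbb{P}^1(\mathbb{Q})$, with the base point $(u_0, y_0)$ itself checked separately (one inspects whether $u_0$ equals the square of an integer with the matching $y$-value). Every other integer solution determines a unique coprime pair $(p,q)$, so the finitely many quartic Thue equations produced by Theorem~\ref{TFracSq} collectively enumerate all integer points on \eqref{EqBiqCur}.
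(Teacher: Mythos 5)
Your proposal is correct, and it reaches the paper's key intermediate object --- an equation of the form \eqref{EqSqFrac} to be handed to Theorem~\ref{TFracSq} --- by a genuinely different parametrization. The paper multiplies by $4c$ to get the ternary form $(b^2-4ac)(x^2)^2+4cy^2-(bx^2+2c)^2=0$, applies Theorem~\ref{ThABC} to parametrize it projectively, and recovers the scale factor from the affine constraint $Z-bX=2c$, arriving at $x^2 = 2c\,P_x(m,n)/(P_z(m,n)-b\,P_x(m,n))$; you instead parametrize the affine conic $Y^2=aU^2+bU+c$ directly by chords through a rational base point. Your invariant computations check out: using $y_0^2=au_0^2+bu_0+c$ one indeed finds $\Disc(P_1)=4c$, $\Disc(P_2)=4a$, and $\Res(P_1,P_2)=b^2-4ac$ for the uncleared forms, so the hypothesis $ac(b^2-4ac)\ne 0$ becomes \emph{exactly} the non-degeneracy needed for Theorem~\ref{TFracSq} --- arguably more transparent than the paper's verification, which must combine $\Res(P_x,P_z)=-4AB^2CZ_0^4$ with the identity $BY_0^2=-AX_0^2-CZ_0^2$. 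Two small caveats: the chord construction is not literally a bijection with $\mathbb{P}^1(\mathbb{Q})$ (the tangent direction returns the base point, and when $a$ is a square the asymptotic directions meet the conic only at infinity), but you only need the injective direction --- each affine rational point other than the base point determines a unique coprime pair $(p,q)$ up to sign --- and that direction holds; and both routes equally presuppose an algorithm for deciding solvability of the conic and producing a base point, which the paper delegates to \cite{Cohen07,Cremona03}. What the paper's route buys is uniformity: Theorem~\ref{ThABC} is reused inside the proof of Theorem~\ref{TFracSq} anyway, and its parametrization comes with integer polynomials and an explicit denominator bound, so no separate clearing of the denominators of $u_0,y_0$ is required.
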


\begin{proof} Multiplying the equation \eqref{EqBiqCur} by $4c$, we can rewrite it as a linear combination of three squares 
with non-zero integer coefficients:\footnote{Alternatively, we can multiply \eqref{EqBiqCur} by $4a$ and obtain 
another linear combination of three squares: $(2ax^2+b)^2+(4ac-b^2)\cdot 1^2 - 4ay^2 = 0$, which has smaller coefficients and thus may be preferable when $c\gg a$.}
$$(b^2 - 4ac) (x^2)^2 + 4cy^2 - (bx^2 + 2c)^2 = 0.$$
Denoting $X=x^2$, $Y=y$, $Z=bx^2 + 2c$, $A=b^2 - 4ac$, $B=4c$, $C=-1$, we get a Diophantine equation:
\beq\label{EqABCXYZ}
A\cdot X^2 + B\cdot Y^2 + C\cdot Z^2 = 0.
\eeq
If this equation is solvable with a particular solution $(X,Y,Z) = (X_0,Y_0,Z_0)$, $Z_0\ne 0$, 
then by Theorem~\ref{ThABC} its general solution is given by
$$(X,Y,Z) = r\cdot \left( P_x(m,n),\ P_y(m,n),\ P_z(m,n) \right),$$
where $P_i(m,n)$ are polynomials defined by \eqref{Pxyz}, 
$m,n$ are coprime integers, and $r$ is a rational number.
In our case, this solution should additionally satisfy the relation:
$$2c = Z - bX = r\cdot \left( P_z(m,n) - b\cdot P_x(m,n) \right),$$
implying that
$$r = \frac{2c}{ P_z(m,n) - b\cdot P_x(m,n) }.$$
So we get a constraining Diophantine equation:
\beq\label{EqXfrac}
x^2 = \frac{2c\cdot P_x(m,n)}{ P_z(m,n) - b\cdot P_x(m,n) },
\eeq
which reduces to a finite number of Thue equations by Theorem~\ref{TFracSq}, unless $\Res(P_x,P_z-b\cdot P_x)=0$ or $\Disc(P_z-b\cdot P_x)=0$.

The case of $\Res(P_x,P_z-b\cdot P_x)=\Res(P_x,P_z)=0$ is impossible since by direct computation we have $\Res(P_x,P_z) = -4AB^2CZ_0^4 \ne 0$.

In the case of $\Disc(P_z-b\cdot P_x)=0$,
we have
$\Disc(P_z-b\cdot P_x) = 4b^2B^2Y_0^2 - 4 AB (Z_0^2 - b^2 X_0^2) = 0$
and thus $b^2BY_0^2 = A(Z_0^2 - b^2 X_0^2)$.
Since $BY_0^2 = -A X_0^2 - C Z_0^2$, we further get $b^2(-A X_0^2 - C Z_0^2) = A(Z_0^2 - b^2 X_0^2)$, which reduces to $A+b^2C=0$. 
However this is impossible since $A+b^2C = -4ac \ne 0$.

Therefore, Theorem~\ref{TFracSq} is applicable.

\end{proof}

As a corollary we get

\begin{theorem}\label{Tzz2}
The system of Diophantine equations:
$$\begin{cases}
z = ax^2 + d_1, \\
z^2 = by^2 + d_2,
\end{cases}
$$
where $a,b,d_1,d_2$ are rational numbers with $abd_2(d_1^2-d_2)\ne 0$,
reduces to a finite number of quartic Thue equations.
\end{theorem}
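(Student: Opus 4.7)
The plan is to eliminate $z$ between the two equations and reduce the system to a single biquadratic curve in $x$ and $y$, after which Theorem~\ref{TBiqCur} applies directly. First I would substitute $z = ax^2 + d_1$ from the first equation into the second, yielding
$$(ax^2 + d_1)^2 = by^2 + d_2,$$
which rearranges to $by^2 = a^2 x^4 + 2ad_1 x^2 + (d_1^2 - d_2)$. To obtain the canonical form $Y^2 = AX^4 + BX^2 + C$, I would multiply both sides by $b$ and set $Y = by$, giving
$$Y^2 = a^2 b \cdot x^4 + 2abd_1 \cdot x^2 + b(d_1^2 - d_2).$$

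Second, I would verify the non-degeneracy hypothesis of Theorem~\ref{TBiqCur}. Setting $A = a^2 b$, $B = 2abd_1$, $C = b(d_1^2 - d_2)$, a direct expansion gives
$$B^2 - 4AC = 4a^2 b^2 d_1^2 - 4a^2 b^2(d_1^2 - d_2) = 4a^2 b^2 d_2,$$
and hence $AC(B^2 - 4AC) = 4 a^4 b^4 d_2 (d_1^2 - d_2)$, which is non-zero by the assumption $abd_2(d_1^2 - d_2) \ne 0$.

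Third, because the coefficients $a,b,d_1,d_2$ are only rational, the resulting $A, B, C$ need not be integers. To handle this, I would pick a positive integer $M$ such that $M^2 A$, $M^2 B$, $M^2 C$ are all integers, and rewrite the curve as $(MY)^2 = M^2 A \cdot x^4 + M^2 B \cdot x^2 + M^2 C$. Any integer solution $(x,y,z)$ of the original system then produces an integer point $(x, My)$ on this integer biquadratic curve, whose non-degeneracy conditions are preserved under the scaling. Theorem~\ref{TBiqCur} now reduces the determination of all such integer points to a finite number of quartic Thue equations.

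I do not foresee any real obstacle: the elimination is straightforward, the non-degeneracy check collapses to the stated hypothesis after a one-line simplification, and clearing denominators to apply the integer-coefficient statement of Theorem~\ref{TBiqCur} is routine. All the substantive work has already been absorbed into Theorems~\ref{TBiqCur} and~\ref{TFracSq}.
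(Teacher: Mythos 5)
Your proof is correct and follows essentially the same route as the paper: substitute $z=ax^2+d_1$ into the second equation, multiply by $b$ to get $(by)^2=a^2bx^4+2abd_1x^2+b(d_1^2-d_2)$, and verify the non-degeneracy condition $B^2-4AC=4a^2b^2d_2\ne 0$ before invoking Theorem~\ref{Tzz2}'s prerequisite, Theorem~\ref{TBiqCur}. Your extra step of clearing denominators to reach integer coefficients is a reasonable addition that the paper leaves implicit (its introduction already notes the reduction works for rational coefficients).
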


\begin{proof} The system implies that $(ax^2 + d_1)^2 = by^2 + d_2$ or $(by)^2 = a^2bx^4 + 2abd_1x^2 + b(d_1^2 - d_2)$. Since
$(2d_1ab)^2 - 4a^2b^2(d_1^2 - d_2) = 4a^2b^2d_2\ne 0$, Theorem~\ref{TBiqCur} applies.
\end{proof}

\subsection*{Example: Ljunggren equation}

We illustrate our method on Ljunggren equation $y^2 = 2x^4 - 1$ whose reduction to Thue equations was first obtained in~\cite{Steiner1991}.
Here we have $(a,b,c)=(2,0,-1)$.
First we compute $A=b^2-4ac=8$, $B=4c=-4$, and $C=-1$ and consider equation \eqref{EqABCXYZ}.
Its particular solution is $(1,1,2)$, which yields by Theorem~\ref{ThABC} a general solution:
$$
(X, Y, Z) = \frac{p}{q}\; ( P_x(m, n),\; P_y(m, n),\; P_z(m, n))
$$
with
\begin{eqnarray*}
P_x(m,n) & = & 8 m^2 - 8mn + 4 n^2,\\
P_y(m,n) & = & -8 m^2 + 16 mn - 4 n^2,\\
P_z(m,n) & = & 16 m^2 - 8 n^2.
\end{eqnarray*}

Now we consider equation \eqref{EqXfrac}:
$$x^2 = \frac{2cP_x(m,n)}{P_z(m,n)-b\cdot P_x(m,n)} = \frac{-2(8 m^2 - 8mn + 4 n^2)}{16 m^2 - 8 n^2} = \frac{-2 m^2 + 2mn - n^2}{2 m^2 - n^2}$$
and use Theorem~\ref{TFracSq} to solve it. 
We take the resultant matrix of $P_1(x,y)=-2 x^2 + 2xy - y^2$ and $P_2(x,y)=2 x^2 - y^2$ and solve the two linear systems \eqref{EqTwoSys}
to obtain $(t_1,t_2,t_3,t_4)=\tfrac{1}{4}(-2, -1, 0, 1)$ and $(t_1,t_2,t_3,t_4)=(-1, -1, -1, 0)$. So we get $G=4$.
Let $g$ range over the divisors of $G$, i.e., $g\in\{-4,-2,-1,1,2,4\}$.

We use Theorem~\ref{TPol2Sq} to represent $P_1(x,y)$ as a linear combination of squares:
$$P_1(x,y)=-2 x^2 + 2xy - y^2 = - \frac{1}{8} \cdot (-4x + 2y)^2 - \frac{1}{2}\cdot y^2 = - \frac{1}{2} \cdot (-2x + y)^2 - \frac{1}{2}\cdot y^2$$
and obtain the equation \eqref{EqQ1Q2} (multiplied by $-2$): 
$$(-2x+y)^2 + y^2 + 2gz^2 = 0.$$
Clearly, it may have non-trivial solutions only for $g<0$ and only when $-1$ is a quadratic residue modulo $2g$, which leaves us the only suitable value $g=-1$.
The corresponding equation has a particular solution $(1,1,1)$ and by Theorem~\ref{ThABC} its general solution is
$$(-2x+y,y,z) = \frac{p}{q}\cdot\left( m^2 + 2mn - n^2, -m^2 + 2mn + n^2, m^2 + n^2 \right)$$
where $(p,q)=1$ and $q>0$ divides 4.

From this solution we express $x = \tfrac{p}{q}\cdot(-m^2 + n^2)$ and $y=\tfrac{p}{q}\cdot(-m^2 + 2mn + n^2)$ and plug them into the equation $P_2(x,y)=g$
to obtain the following quartic equations:
$$m^4 + 4m^3n - 6m^2n^2 - 4mn^3 + n^4 = q^2\cdot \frac{-1}{p^2}$$
and conclude that $p^2=1$. Since the polynomial in the left hand side is irreducible, these are Thue equations.

We used PARI/GP to solve the resulting three Thue equations (for $q=1,2,4$) and found that only the one with $q=2$ has solutions,
which are $(m,n) = \pm(5,-1)$, $\pm(1,5)$, $(\pm 1,\pm 1)$ (the same solutions were earlier obtained by Lettl and Peth\H{o} \cite{LettlPetho}).
The corresponding solutions to $\tfrac{P_1(m,n)}{P_2(m,n)}=x^2$ are 
$(m,n) = \pm(12,17)$ and $\pm(0,1)$, giving respectively $x=\pm 13$ and $\pm 1$. 
Thus the solutions to Ljunggren equation are $(\pm 13,\pm 239)$ and $(\pm 1,\pm 1)$.

\section{Near-multiples of squares in Lucas sequences}\label{SecInLucas}

The pair of Lucas sequences $U(P,Q)$ and $V(P,Q)$ are defined by the same linear recurrent relation with the coefficient $P,Q\in\mathbb{Z}$ but
different initial terms:
$$
\begin{array}{lll} 
U_0(P,Q)=0, & U_1(P,Q)=1, & U_{n+1}(P,Q) = P\cdot U_n(P,Q) - Q\cdot U_{n-1}(P,Q),\, n\geq 1;\\
V_0(P,Q)=2, & V_1(P,Q)=P, & V_{n+1}(P,Q) = P\cdot V_n(P,Q) - Q\cdot V_{n-1}(P,Q),\, n\geq 1.
\end{array}
$$

Some Lucas sequences have their own names:
\begin{center} \begin{tabular}{|l|l|l|l|}
\hline
Sequence & Name & Initial terms & Index in \cite{OEIS}\\
\hline\hline
$U(1,-1)$ & Fibonacci numbers & $0, 1, 1, 2, 3, 5, 8, 13, 21, 34, 55, 89, \ldots$ & \seqnum{A000045} \\
\hline
$V(1,-1)$ & Lucas numbers & $2, 1, 3, 4, 7, 11, 18, 29, 47, 76, 123, \ldots$ & \seqnum{A000032} \\
\hline
$U(2,-1)$ & Pell numbers &  $0, 1, 2, 5, 12, 29, 70, 169, 408, 985, \ldots$  & \seqnum{A000129} \\
\hline
$V(2,-1)$ & Pell-Lucas numbers &  $2, 2, 6, 14, 34, 82, 198, 478, 1154, \ldots$ & \seqnum{A002203} \\
\hline
\end{tabular}
\end{center}
Other examples include Jacobsthal numbers $U(1,-2)$, Mersenne numbers $U(3,2)$ etc.

The characteristic polynomial of Lucas sequences $U(P,Q)$ and $V(P,Q)$ is $\lambda^2 - P\lambda + Q$ with the discriminant $D=P^2 - 4Q$.
For non-degenerate sequences, the discriminant $D$ is a positive non-square integer.

Terms of Lucas sequences satisfy the following identity:
\begin{equation}\label{MainEq}
V_n(P,Q)^2 - D\cdot U_n(P,Q)^2 = 4 Q^n.
\end{equation}

In the current paper, we focus on the case of $Q=1$ or $Q=-1$, implying that
the pairs $(V_n(P,Q),U_n(P,Q))$ satisfy the equation:\footnote{Here and everywhere below $\pm$ in the r.h.s. of an equation means that we accept both signs as solutions.} 
\begin{equation}\label{EqXDY}
x^2 - D y^2 = \pm 4.
\end{equation}
The converse statement can be used to prove that given positive integers belong to $V(P,Q)$ or $U(P,Q)$ respectively:

\begin{theorem}[Theorem~1 in \cite{Alekseyev2011}]\label{Trecog} 
Let $P$, $Q$ be integers such that $P>0$, $|Q|=1$, $(P,Q)\ne(3,1)$, 
and $D=P^2 - 4Q > 0$.
If positive integers $u$ and $v$ are such that 
$$v^2 - D u^2 = \pm 4,$$
then
$u = U_n(P,Q)$ and $v = V_n(P,Q)$ for some integer $n\geq 0$.
\end{theorem}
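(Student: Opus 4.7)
The plan is to invoke the classical Pell-type structure theorem for solutions of $v^2 - Du^2 = \pm 4$ and then pin down the fundamental solution by an elementary case analysis at $u = 1$. Let $\alpha := (P + \sqrt{D})/2$ and $\bar\alpha := (P - \sqrt{D})/2$ be the roots of the characteristic polynomial $\lambda^2 - P\lambda + Q$. Induction on $n$ yields the Binet identity $\alpha^n = (V_n + U_n\sqrt{D})/2$, which both recovers~(\ref{MainEq}) and confirms that each pair $(V_n, U_n)$ solves $v^2 - Du^2 = \pm 4$.

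By the standard structure theorem (derivable from the continued-fraction expansion of $\sqrt{D}$, or from Dirichlet's unit theorem applied to the real quadratic order $\mathbb{Z}[\alpha] \subset \mathbb{Q}(\sqrt{D})$), every positive integer solution $(v, u)$ of $v^2 - Du^2 = \pm 4$ satisfies $(v + u\sqrt{D})/2 = \omega_1^n$ for some $n \geq 1$, where $\omega_1 > 1$ is the fundamental solution (the positive solution minimizing $(v + u\sqrt{D})/2$). The parity condition needed to place $(v + u\sqrt{D})/2$ in $\mathbb{Z}[\alpha]$ is automatic from $v^2 \equiv Du^2 \pmod{4}$. It therefore suffices to show $\omega_1 = \alpha$.

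Since $(P, 1)$ is a solution (from $P^2 - D = 4Q = \pm 4$), the minimal positive $u$-coordinate is $1$, so $\omega_1$ takes the form $(v + \sqrt{D})/2$ for some smallest $v$. Setting $u = 1$ gives $P^2 - v^2 = 4Q \mp 4$, where $\mp$ matches the sign of $\pm 4$ in the original equation. The trivial case yields $v = P$. The nontrivial case with $v < P$ forces $Q = 1$ and $(P - v)(P + v) = 8$, which together with $v \equiv P \pmod{2}$ gives $(P, v) = (3, 1)$, the excluded case. The remaining nontrivial case with $v > P$ forces $Q = -1$ and yields $(P, v) = (1, 3)$, producing $\omega = (3 + \sqrt{5})/2 > \alpha$ and hence no smaller solution. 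Thus under the theorem's hypotheses $\omega_1 = \alpha$, and every positive solution is $(v, u) = (V_n, U_n)$ for some $n \geq 1$. The main obstacle is exactly this $(3, 1)$-anomaly: in the excluded case $\alpha = ((1 + \sqrt{5})/2)^2$, so the Lucas pairs $(V_n(3, 1), U_n(3, 1))$ cover only the even powers of the true fundamental unit $(1 + \sqrt{5})/2$ of $\mathbb{Q}(\sqrt{5})$, missing valid solutions such as $(v, u) = (1, 1)$.
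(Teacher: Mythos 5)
The paper does not prove this statement at all: it is imported verbatim as Theorem~1 of \cite{Alekseyev2011}, so there is no internal proof to compare against. Your argument is the standard one for such recognition theorems and is sound in substance: the quantities $(v+u\sqrt{D})/2$ attached to solutions of $v^2-Du^2=\pm 4$ form (up to sign) the unit group of the order $\mathbb{Z}[\alpha]$, the positive solutions are exactly the powers $\omega_1^n$, $n\geq 1$, of the smallest such unit exceeding $1$, and your case analysis at $u=1$ correctly isolates $(P,Q)=(3,1)$ as the only configuration in which $\omega_1\neq\alpha$, while correctly dismissing $(P,v)=(1,3)$ because $(3+\sqrt{5})/2>\alpha$. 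Two small steps deserve explicit justification. First, the structure theorem requires $D$ to be a positive nonsquare; this does follow from the hypotheses ($Q=1$ forces $P\geq 3$, so in all cases $D\geq 5$, and $D=P^2\mp 4$ cannot be a square when $P>0$ and $D>0$), but it should be stated. Second, you pass from ``the minimal positive $u$-coordinate is $1$'' to ``$\omega_1$ has $u=1$,'' which tacitly assumes that the minimizer of $(v+u\sqrt{D})/2$ over positive solutions occurs at the minimal $u$; this is true but needs a line --- for instance, if $u'>u\geq 1$ then $v'^2\geq Du'^2-4\geq Du^2+D(2u+1)-4>Du^2+4\geq v^2$, so both coordinates, and hence $\omega$, increase with $u$. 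With these two remarks added the proof is complete, and your closing observation that $\alpha=\bigl((1+\sqrt{5})/2\bigr)^2$ in the excluded case correctly explains why $(3,1)$ must be omitted.
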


\begin{theorem}\label{finit1} For fixed integers $a\ne 0$ and $b$, finding terms of the form $am^2 + b$ 
in nondegenerate Lucas sequence $U(P,Q)$ or $V(P,Q)$ with $Q=\pm 1$ reduces to a finite number of Thue equations, 
unless this sequence is $V(P,Q)$ and $b=\pm 2$. 
\end{theorem}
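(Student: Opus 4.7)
My plan is to use the Pell-like identity~\eqref{MainEq}, which for $Q=\pm 1$ reduces to $V_n(P,Q)^2 - D\cdot U_n(P,Q)^2 = \pm 4$, and substitute $am^2+b$ for the appropriate Lucas term. Each substitution yields a biquadratic curve, to which I then apply Theorem~\ref{TBiqCur} to obtain the reduction to finitely many quartic Thue equations.

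For the case of $U(P,Q)$, setting $U_n = a m^2 + b$ and $v = V_n$ gives
$$v^2 = D a^2\, m^4 + 2 D a b\, m^2 + (Db^2\pm 4),$$
where the sign depends on the parity of $n$ when $Q=-1$, so at most two such biquadratics arise. To invoke Theorem~\ref{TBiqCur} one must verify $Da^2\ne 0$, $Db^2\pm 4\ne 0$, and $B^2-4AC\ne 0$: the first is immediate from $a\ne 0$, the last follows from a short computation giving $B^2-4AC = \mp 16 D a^2$, and the middle would require $Db^2 = \mp 4$, which is impossible since $D$ is a positive non-square integer (the only integer solutions of $Db^2=4$ have $D\in\{1,4\}$, both perfect squares). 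Hence Theorem~\ref{TBiqCur} applies in this case.

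For the case of $V(P,Q)$, setting $V_n = a m^2 + b$ and $u = U_n$, then multiplying through by $D$ and squaring, one obtains
$$(Du)^2 = D a^2\, m^4 + 2 D a b\, m^2 + D(b^2 \mp 4).$$
Here $Da^2\ne 0$ and $B^2-4AC = \pm 16 D^2 a^2 \ne 0$, but the constant term $D(b^2\mp 4)$ vanishes precisely when $b^2=4$, i.e., $b=\pm 2$. This is exactly the exception stated in the theorem, and it is unavoidable: classical doubling identities such as $V_{2n}(P,1) + 2 = V_n(P,1)^2$ exhibit infinitely many $V$-terms of the form $m^2-2$, ruling out any finite Thue-equation reduction there. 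Outside this exceptional case, Theorem~\ref{TBiqCur} applies, and any integer solution of the resulting biquadratic gives a nonnegative pair $(u,v)$ with $v^2 - D u^2 = \pm 4$; by Theorem~\ref{Trecog} every such pair arises as $(U_n, V_n)$ for some $n\geq 0$, so the finite collection of Thue-equation solutions captures precisely the indices $n$ with $U_n$ or $V_n$ of the form $am^2+b$. The main obstacle is thus purely the constant-term degeneracy identified above, which the hypothesis of the theorem already excludes.
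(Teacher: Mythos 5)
Your proposal is correct and follows essentially the same route as the paper's own proof: substitute $am^2+b$ into the identity $V_n^2 - DU_n^2 = \pm 4$, obtain the biquadratics $v^2 = Da^2m^4+2Dabm^2+(Db^2\pm 4)$ and $(Du)^2 = Da^2m^4+2Dabm^2+D(b^2\mp 4)$, and check the hypotheses of Theorem~\ref{TBiqCur}, with the exception $b=\pm 2$ arising exactly from the vanishing constant term in the $V$ case. Your justification that $Db^2\pm 4\ne 0$ (via $D$ being a positive non-square) is in fact slightly more complete than the paper's terse parenthetical, and your closing appeal to Theorem~\ref{Trecog} for the converse correspondence matches the paper's implicit use of it.
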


\begin{proof} Let $ax^2 + b$ be a term of $U(P,\pm 1)$. By Theorem~\ref{Trecog} we have $y^2 = D (ax^2+b)^2 \pm 4 = Da^2x^4 + 2Dabx^2 + (Db^2 \pm 4)$.
Theorem~\ref{TBiqCur} applies, since $Db^2 \pm 4\ne 0$ (notice that $D\ne\pm 1$) and $(2Dab)^2 - 4 Da^2(Db^2\pm 4) = \mp 16 Da^2\ne 0$.

Now let $ax^2 + b$ be a term of $V(P,\pm 1)$. By Theorem~\ref{Trecog} we have $(Dy)^2 = D((ax^2+b)^2 \mp 4) = Da^2x^4 + 2Dabx^2 + D(b^2 \mp 4)$.
We have $(2Dab)^2 - 4D^2a^2(b^2 \mp 4) = \pm 16D^2a^2\ne 0$. Theorem~\ref{TBiqCur} applies here as soon as $b^2 \mp 4\ne 0$ (i.e., $b\ne \pm 2$).
\end{proof}

\begin{remark} For $b=\pm 2$, the sequence $V(P,Q)$ may have an infinite number of terms of the form $am^2+b$.
In particular, Lucas sequence $V(1,-1)$ (Lucas numbers) has infinitely many terms of the forms $m^2+2$ and $m^2-2$ since
$V_{4n}(1,-1) = V_{2n}(1,-1)^2 - 2$ and $V_{4n+2}(1,-1) = V_{2n+1}(1,-1)^2 + 2$.
\end{remark}

The following theorem allows to find all terms solutions of the form $am^2 \pm 2$ in $V(P,\pm 1)$.

\begin{theorem}\label{finit2} For fixed integers $a\ne 0$ and $b=\pm 2$, finding terms of the form $am^2 + b$ 
in nondegenerate Lucas sequence $V(P,Q)$ with $Q=\pm 1$ reduces to a finite number of Thue equations and a Pell-Fermat equation.
\end{theorem}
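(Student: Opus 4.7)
My plan is to follow the strategy of Theorem~\ref{finit1} while separately handling the two possible signs of $4Q^n$ in the defining identity $V_n(P,Q)^2 - D U_n(P,Q)^2 = 4 Q^n$. Substituting $V_n = a m^2 + b$ and using $b^2 = 4$, this identity becomes
\[
(DU_n)^2 = D a^2 m^4 + 2 D a b\, m^2 + D (b^2 \mp 4),
\]
as in the proof of Theorem~\ref{finit1}, except that now the constant term $D(b^2 \mp 4)$ is either $8D$ or $0$, depending on whether $V_n^2 - D U_n^2 = -4$ or $V_n^2 - D U_n^2 = +4$. Only the second choice caused the previous argument to collapse, so I would analyze the two signs independently.

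For the sign $V_n^2 - D U_n^2 = -4$ (which is possible only when $Q = -1$ and $n$ is odd), the biquadratic $(D U_n)^2 = D a^2 m^4 + 2 D a b\, m^2 + 8 D$ still satisfies the hypotheses of Theorem~\ref{TBiqCur}: its leading and constant coefficients $Da^2$ and $8D$ are nonzero, and the analogue of ``$b^2-4ac$'' equals $(2Dab)^2 - 32 D^2 a^2 = 4 D^2 a^2(b^2 - 8) = -16 D^2 a^2 \neq 0$. Hence this sign contributes a finite list of quartic Thue equations. For the remaining sign $V_n^2 - D U_n^2 = +4$, the constant term of the biquadratic vanishes and Theorem~\ref{TBiqCur} no longer applies, but the right-hand side can be recast by completing the square: since $b^2 = 4$,
\[
a^2 m^4 + 2 a b\, m^2 = (a m^2 + b)^2 - 4,
\]
and consequently the degenerate biquadratic $D U_n^2 = a^2 m^4 + 2 a b\, m^2$ becomes the Pell-Fermat equation
\[
W^2 - D U_n^2 = 4, \qquad W := a m^2 + b,
\]
augmented by the side condition that $(W-b)/a$ be a non-negative perfect square.

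The main obstacle is conceptual rather than computational: one must recognise that the failure of Theorem~\ref{TBiqCur} at $b = \pm 2$ is not an artefact of the method but a genuine structural feature of the problem---witnessed by the infinite families $V_{4n}(1,-1) = V_{2n}(1,-1)^2 - 2$ and $V_{4n+2}(1,-1) = V_{2n+1}(1,-1)^2 + 2$ from the preceding remark---so that the Pell-Fermat equation cannot in general be further reduced to Thue equations and must be carried along as part of the final output of the reduction.
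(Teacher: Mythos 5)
Your treatment of the non-degenerate sign is correct and matches the paper: for $V_n^2-DU_n^2=-4$ one gets $(DU_n)^2=Da^2m^4+2Dabm^2+8D$, and your verification that the relevant quantity $(2Dab)^2-4(Da^2)(8D)=-16D^2a^2$ is nonzero is exactly what is needed to invoke Theorem~\ref{TBiqCur}, so this case does yield finitely many Thue equations.

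The gap is in the degenerate sign $V_n^2-DU_n^2=+4$. Writing $W:=am^2+b$ and observing that $(W,U_n)$ satisfies $W^2-DU_n^2=4$ ``augmented by the side condition that $(W-b)/a$ be a perfect square'' is circular: by Theorem~\ref{Trecog} the solutions of $W^2-DU^2=4$ are precisely the pairs $(V_k,U_k)$, so your ``Pell--Fermat equation'' carries no information and the entire content of the problem is pushed into the side condition, which is exactly the question you started with (for which $k$ is $V_k$ of the form $am^2+b$?). That is not a reduction to a Pell--Fermat equation; it leaves an infinite, unstructured search. The missing idea is to make $m$ itself one of the two Pell unknowns: from $(DU_n)^2=Da^2m^4+2Dabm^2$, the case $m=0$ is handled separately (it gives $V_0=2$), and for $m\ne 0$ one divides by $m^2$ (noting $m\mid DU_n$) to obtain
$$\left(\tfrac{DU_n}{m}\right)^2-Da^2m^2=2Dab,$$
a genuine Pell--Fermat equation in the unknowns $DU_n/m$ and $m$. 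Its solution set, which can be described completely by standard methods (a finite union of orbits under the fundamental automorph), directly parametrizes the admissible values of $m$ and hence the possibly infinite family of terms $am^2+b$; this is the sense in which the theorem's conclusion is achieved. Your closing observation that the infinite families $V_{4n}(1,-1)=V_{2n}(1,-1)^2-2$ show the degenerate case cannot be reduced to Thue equations is a correct and relevant sanity check, but it does not substitute for exhibiting the correct Pell--Fermat equation.
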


\begin{proof}
Let $ax^2 + b$ with $b=\pm 2$ be a term of $V(P,\pm 1)$. By Theorem~\ref{Trecog}, we have the following equations:
$$(Dy)^2 = D((ax^2+b)^2 + 4) = Da^2x^4 + 2Dabx^2 + 8D$$
and 
$$(Dy)^2 = D((ax^2+b)^2 - 4) = Da^2x^4 + 2Dabx^2.$$
The former equation is addressed by Theorem~\ref{TBiqCur}, while the latter equation always has solution $x=0$ (corresponding to the term $V_0=2$) and
for $x\ne 0$ is equivalent to the Pell-Fermat equation:
$$\left(\tfrac{Dy}{x}\right)^2 - Da^2x^2 = 2Dab.$$
For solution of this equation, we refer to \cite[Section~6.3.5]{Cohen07}.
\end{proof}

Theorem~\ref{finit1} implies finiteness of the terms of the form $am^2+b$ in a Lucas sequence $U(P,Q)$ or $V(P,Q)$ with $Q=\pm 1$, unless 
this sequence is $V(P,Q)$ and $b=\pm 2$. The latter case is addressed by Theorem~\ref{finit2} and may sometimes result in infinitely many terms.
While this characterization represents a special case of the results by Nemes and Peth\H{o} \cite{NemesPetho} and by Peth\H{o} \cite{PethoGn},
our proofs rely on the simple transformation method developed in this paper.

In Table~\ref{Tab1} we list all the terms of the form $am^2 + b$ for $1\leq a\leq 3$ and $-3\leq b\leq 3$ in Fibonacci, Lucas, Pell, and Pell-Lucas numbers.

\begin{footnotesize}
\begin{table}
\begin{center}
\begin{tabular}{|l||l|l|l|l|}
\hline
     & Fibonacci numbers & Lucas numbers & Pell numbers & Pell-Lucas numbers \\
Form & $U(1,-1)$ & $V(1,-1)$ & $U(2,-1)$ & $V(2,-1)$ \\
\hline\hline
$m^2$ & $0, 1, 144$ & $1,4$ & $0, 1, 169$ & none \\
\hline
$m^2 + 1$ & $1, 2, 5$ & $1, 2$ & $1,2,5$ & $2,82$ \\
\hline
$m^2 - 1$ & $0, 3, 8$ & $3$ & $0$ & none \\
\hline
$m^2 + 2$ & $2,3$ & $2, 11$, and $V_{4n+2}$ & $2$ & $2$ and $V_{4n+2}$ \\
\hline
$m^2 - 2$ & $2, 34$ & $V_{4n}$ & $2$ & $14$ and $V_{4n}$ \\
\hline
$m^2 + 3$ & $3$ & $3, 4, 7, 199$ & $12$ & none \\
\hline
$m^2 - 3$ & $1, 13, 1597$ & $1$ & $1$ & $6$ \\
\hline
$2m^2$  & $0, 2, 8$ & $2, 18$ & $0, 2$ & $2$ \\
\hline
$2m^2 + 1$ & $1, 3$ & $1, 3$ & $1$ & none \\
\hline
$2m^2 - 1$ & $1$ & $1, 7, 199$ & $1$ & none \\
\hline
$2m^2 + 2$ & $2, 34$ & $2,4$ & $2$ & $2$ and $V_{4n}$ \\
\hline
$2m^2 - 2$ & $0$ & none & $0, 70$ & $V_{4n+2}$ \\

\hline
$2m^2 + 3$ & $3,5,21$ & $3, 11$ & $5$ & none \\
\hline
$2m^2 - 3$ & $5$ & $29, 47, 64079$ & $5, 29$ & none \\

\hline
$3m^2$  & $0, 3$ & $3$ & $0, 12$ & none \\
\hline
$3m^2 + 1$ & $1, 13$ & $1, 4, 76$ & $1$ & none \\
\hline
$3m^2 - 1$ & $2$ & $2, 11, 47$ & $2$ & $2$ \\
\hline
$3m^2 + 2$ & $2, 5$ & $2, 29$ & $2, 5, 29$ & $2,14$ \\
\hline
$3m^2 - 2$ & $1$ & $1$ & $1$ &  none \\
\hline
$3m^2 + 3$ & $3$ & $3$ & none & $6$ \\
\hline
$3m^2 - 3$ & $0,144$ & none & $0$ & none \\
\hline
\end{tabular}
\caption{Terms of the form $am^2 + b$ for $1\leq a\leq 3$ and $-3\leq b\leq 3$ in Fibonacci, Lucas, Pell, and Pell-Lucas numbers.\label{Tab1}}
\end{center}
\end{table}
\end{footnotesize}

\section*{Acknowledgements}

We thank Dr. John Cremona, Dr. Nikos Tzanakis, and Dr. Noam Elkies for a number of helpful suggestions and discussions.

The first author was supported by the National Science Foundation under Grant No. IIS-1253614. The second author was partially 
supported by the European Union and the European Social Fund through project ``Supercomputer, the national virtual lab"
under Grant No. T\'AMOP-4.2.2.C-11/1/KONV-2012-0010.

\bibliographystyle{plain} 
\bibliography{near-new.bib} 

\bigskip
\hrule
\bigskip

\noindent 2010 {\it Mathematics Subject Classification}:
Primary 11Y50; Secondary 11D25, 11B39, 14G05. 

\noindent \emph{Keywords: }
integral point, biquadratic curve, elliptic curve, Thue equation, Fibonacci number, Lucas sequence

\bigskip
\hrule
\bigskip

\noindent (Concerned with sequences
\seqnum{A000032},
\seqnum{A000045},
\seqnum{A000129},
\seqnum{A002203}.)

\end{document}